\newtheorem{theorem}{Theorem}
\newtheorem*{thm}{Theorem}
\newtheorem*{proposition}{Proposition}
\newtheorem{lemma}{Lemma}
\theoremstyle{definition}
\theoremstyle{remark}
\begin{document}

\title[]{ Polynomials with Zeros on the Unit Circle: Regularity of Leja Sequences}
\keywords{Low discrepancy sequence, potential theory, discrepancy, Erd\H{o}s-Turan inequality, fractional Laplacian, Leja sequence, irregularities of distribution, logarithmic potential.}
\subjclass[2010]{30C15, 31C20, 42B05.}

\author[]{Stefan Steinerberger}
\address{Department of Mathematics, University of Washington, Seattle, WA 98195, USA}
\email{steinerb@uw.edu}
\thanks{The author is supported by the NSF (DMS-1763179) and the Alfred P. Sloan Foundation.}

\begin{abstract} Let $z_1, \dots, z_m$ be $m$ distinct complex numbers, normalized to $|z_k| = 1$, and consider the polynomial
$$ p_{m}(z) = \prod_{k=1}^{m}{(z-z_k)}.$$
We define a sequence of polynomials in a greedy fashion,
$$ p_{N+1}(z) = p_{N}(z) \left(z - z^*\right)\qquad \mbox{where}~z^* = \arg\max_{|z|=1} |p_{N}(z)|,$$
and prove that, independently of the initial polynomial $p_m$, the roots of $p_{N}$ equidistribute in angle at rate at most $(\log{N})^2/N$. This even persists when sometimes adding `adversarial' points by hand. We also obtain sharp rates for an $L^2-$version of a problem first raised by Erd\H{o}s and solved by Beck in $L^{\infty}$.
\end{abstract}

\maketitle

\section{Introduction}
\subsection{Introduction.}   Let $(x_n)_{n=1}^{\infty}$ be a sequence on $[0,1]$. We define the discrepancy function $D_N:[0,1] \rightarrow [0,1]$ associated of the first $N$ elements via
$$ D_N(x) = \left| \frac{1}{N} \#\left\{ 1 \leq k \leq N: x_k \leq x\right\} - x\right|$$
van der Corput \cite{vdc, vdc2} asked in 1935 whether there was a sequence for which $  \| D_N\|_{L^{\infty}} \leq c/N$ for some constant $c$ and all $N \in \mathbb{N}$. This would correspond to an exceptionally regular sequence having the property that its first $N$ elements are, up to a constant, as regularly distributed as possible (uniformly in $N$). It was shown by van Aardenne-Ehrenfest \cite{aard} that such sequences do not exist. This prompts the question: how regular can sequences be?
Improving on work of K. F. Roth \cite{roth}, W. M. Schmidt \cite{sch} established the optimal result stating that for a universal $c>0$ and any sequence $(x_n)_{n=1}^{\infty}$ in $[0,1]$ there are infinitely many $N$ for which
$$\| D_N\|_{L^{\infty}} \geq c \frac{\log{N}}{N}.$$
 Sequences that attain it mainly stem from two types of regular structures:
\begin{enumerate}
\item \textbf{Irrational Rotations.} If $\alpha \in \mathbb{R}$ is badly approximable ($\alpha = \sqrt{2}$ works), then the sequence $x_n = \left\{n \alpha \right\}$, where
$\left\{x \right\} = x - \left\lfloor x \right \rfloor$ is the fractional part, is known to have the optimal growth rate $N \cdot \|D_N\|_{L^{\infty}} \leq c_{\alpha} \log{N}$. 

\item \textbf{Digit Expansions.} The van der Corput sequence has a simple definition: to obtain $x_n$, write the integer $n$ in binary expansion, reverse the order of digits and convert it to a real number in $[0,1]$. This sequence begins
$$ \frac{1}{2}, ~\frac{1}{4},~\frac{3}{4},~\frac{1}{8},~\frac{5}{8},~\frac{3}{8},~\frac{7}{8},~\dots$$
\end{enumerate}
Given the importance of the question (and the usefulness of such sequences), both examples have
been generalized in a large number of directions, we refer to the classical textbooks by Beck \& Chen \cite{bc}, Dick \& Pillichshammer \cite{dick}, Drmota \& Tichy \cite{drmota} and Kuipers \& Niederreiter \cite{kuipers}. 

\subsection{Motivation.} Mathematics is full of regular structures -- it is interesting that the main `sources of regularity' for this problem have been restricted to these two very specific structures. We believe it could be quite interesting to see whether existing mathematical structures could be used to construct new sequences which are highly regular for reasons completely different from the ones we mentioned above.
There is a second motivating factor: while van der Corput's question has been answered for sequences on $[0,1]$, the problem is wide open even on $[0,1]^2$: here, we define the discrepancy function $D_N(\textbf{x})$ in the analogous way by considering the number of elements in the box $[0,\textbf{x}] \subset [0,1]$. Results for this case were obtained by K. F. Roth \cite{roth}, J. Beck \cite{beck}. Bilyk \& Lacey \cite{bil3} and Bilyk, Lacey \& Vagharshakyan \cite{bil4}. What makes the problem of $[0,1]^d$ for $d\geq 2$ particularly interesting is that there  are two competing conjectures (corresponding to different powers of $\log{N}$). 
\begin{itemize}
\item \textbf{Either} we know the most regular sequences in $[0,1]^d$ (with variations of the Kronecker sequence or the van der Corput sequence providing examples) and we simply do not know how to prove that nothing better exists
\item \textbf{or} there are structures more regular than anything that we currently know.
\end{itemize}
Both conjectures have interesting arguments supporting them. Some would argue that sequences and their regularity properties have been studied
for over a century, how likely is it that such extraordinary objects could have been overlooked for so long? The second conjecture is supported by a number of structural similarities between this problem and philosophically related other problems (we refer to the excellent survey \cite{bil} for an in-depth discussion).
Finally, while the problem as phrased above has immediate intrinsic appeal, there are also practical considerations: highly regular sequences are useful in a variety of settings (sampling, interpolation, numerical integration, computer graphics, ...). While sequences constructed via combinatorial or number-theoretic reasoning do indeed perform well on $[0,1]^d$, it is not as clear how one should proceed if one works in a general domain $\Omega \subset \mathbb{R}^d$ or on a manifold $(M,g)$.

\section{Statement of Results}
The main goal of this paper is to discuss a new source of regularity: potential theory. We consider Leja sequences on the unit circle which are given by the following construction: let $z_1, \dots, z_m$ be $m$ distinct complex numbers normalized to $|z_k| = 1$ and consider the polynomial
$$ p_{m}(z) = \prod_{k=1}^{m}{(z-z_k)}.$$
Consider a sequence of polynomials defined in a greedy fashion:
$$ p_{N+1}(z) = p_{N}(z) \left(z - z^*\right)\qquad \mbox{where}~z^* = \arg\max_{|z|=1} |p_{N}(z)|.$$
If the maximum is attained in more than one point, any of the points in which the maximum is attained is admissible. This gives rise to a sequence of polynomials defined by an increasing sequence of roots $(z_n)_{n=1}^{\infty}$. It is known that if the initial polynomial is linear, $p_1(z) = z - z_1$, then the arising sequence of roots can be characterized in terms of the van der Corput sequence: a more general result along these lines is due to Pausinger \cite{pausinger}. To the best of our knowledge, nothing is known when the initial polynomial is of higher order: we show that the mechanism works for any initial polynomial. Moreover, \textit{the underlying mechanism is stable} (see \S 2.1.).

\begin{center}
\begin{figure}[h!]
\begin{tikzpicture}[scale=2]
\node at (-2,0) {};
\draw[thick] (0,0) circle (1cm);
\filldraw (0.523, 0.851) circle (0.025cm);
\filldraw (-0.367, 0.93) circle (0.025cm);
\filldraw (-0.971, 0.239) circle (0.025cm);
\filldraw (-0.964, -0.265) circle (0.025cm);
\filldraw (-0.666, -0.745) circle (0.025cm);
\filldraw (-0.202, -0.979) circle (0.025cm);
\filldraw (0.961, -0.273) circle (0.025cm);
\filldraw (1, 0) circle (0.025cm);
\draw [thick, ->] (1.3,0.75) -- (1.05, 0.7);
 \node at (1.7, 0.9) {new point here};
\draw [thick] (0.809, 0.587) circle (0.03cm);
\end{tikzpicture}
\caption{Given a set of points, we add the new point that maximizes the product of the distances to the previous points. }
\end{figure}
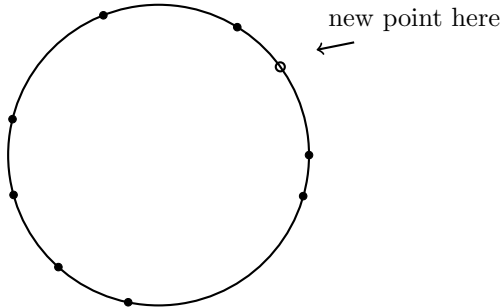
\end{center}
We will, throughout the paper, work on $[0,1]$ after transforming variables. Writing complex numbers of size 1 as
$ z_k = e^{2 \pi i x_k},$
we have
$$ \log  \left( \prod_{j=1}^{N}{\| e^{2\pi ix}- z_j\|}\right) = \sum_{j = 1 }^{N}{\log{|2\sin{(\pi |x-x_j|)}|}}.$$
At each step, we pick the next point $x_{N+1}$ to maximize this expression (see Fig. 2).  This function has logarithmic singularities in $x_1, \dots, x_N$ where it tends to $-\infty$. The underlying idea is as follows: each existing point $x_1, \dots, x_N$ can be thought of as a particle contributing to a global `energy field'. The field is quite negative very close to the particle (certainly the product of all the distances is quite small when one is very close to one of the existing points). 
 The new point has the largest `energy' and is thus `the furthest away'. Of course things are not quite as simple: which functions (or `energy fields') will work and \textit{why} do they work? We refer to Fig. 1 and Fig. 2 for examples and to \S 2.5. and \cite{brown} for the bigger question.\\

 We first discuss a variation of Leja sequences which we call 'Symmetric Leja sequences'. The advantage of this construction is that there is an additional degree of symmetry which allows us to phrase all the regularity statements in terms of the classical discrepancy function $D_N$. This will be carried out in \S 2.1, in \S 2.2 we discuss an associated potential-theoretic estimate due to G. Wagner. \S 2.3 discusses the general case for which we obtain a regularity statement in terms of, a Fourier-analytic measure of regularity, the \textit{diaphony}
$$ \|F_N\|_{L^2}^2 = \frac{1}{2\pi^2} \sum_{k=1}^{\infty} \frac{1}{k^2}  \left| \frac{1}{N} \sum_{k=1}^{N} e^{2 \pi i \ell x_k} \right|^2.$$
An Erd\H{o}s problem is discussed in \S 2.4. We discuss what we consider to be one of the main open problems in \S 2.5, \S.2.6 discusses related results.

\begin{center}
\begin{figure}[h!]
\begin{tikzpicture}[scale=5]
\node at (0.5,0.4) {\includegraphics[width=0.414\textwidth]{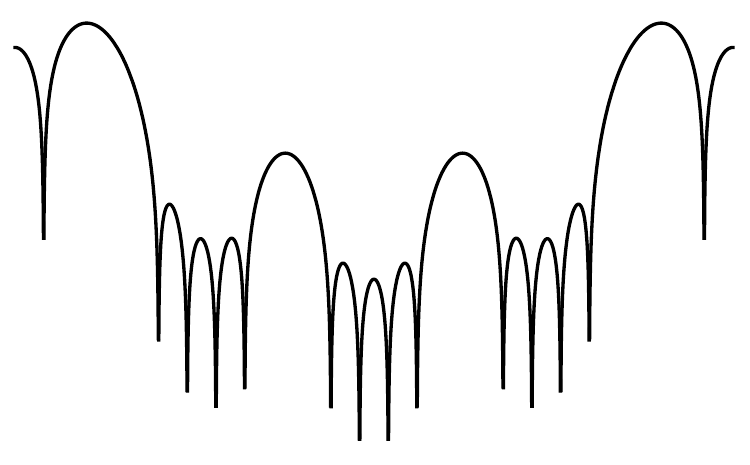}};
\node at (-0.5,0) {};
\draw (0,0) -- (1,0);
\filldraw (0.04, 0) circle (0.01cm);
\filldraw (0.2, 0) circle (0.01cm);
\filldraw (0.24,0, 0) circle (0.01cm);
\filldraw (0.28, 0) circle (0.01cm);
\filldraw (0.32, 0) circle (0.01cm);
\filldraw (0.44, 0) circle (0.01cm);
\filldraw (0.48, 0) circle (0.01cm);
\filldraw (0.52, 0) circle (0.01cm);
\filldraw (0.56, 0) circle (0.01cm);
\filldraw (0.68, 0) circle (0.01cm);
\filldraw (0.72, 0) circle (0.01cm);
\filldraw (0.76, 0) circle (0.01cm);
\filldraw (0.8, 0) circle (0.01cm);
\filldraw (0.96, 0) circle (0.01cm);
\draw[thick] (0.9, 0) circle (0.012cm);
\node at (1.3, 0.8) {maximum here};
\draw [thick, ->] (1.3,0.75) -- (1.05, 0.7);
\draw[thick] (0.9, 0.69) circle (0.012cm);
\draw [ ->] (0.9,0.6) -- (0.9, 0.1);
\node at (0.9, -0.1) {new point here};
\end{tikzpicture}
\caption{Given a set of points, find a place where the function assumes a maximum and add the location as a new point. }
\end{figure}
\end{center}

\subsection{Symmetric Leja Sequences}
Let $z_1, \dots, z_m$ be $m$ distinct complex numbers normalized to $|z_k| = 1$ and consider the polynomial
$$ p_{2m}(z) = \prod_{k=1}^{m}{(z-z_k)(z-\overline{z_k})}.$$
We introduce a sequence of polynomials in a greedy fashion: 
\begin{align*}
 p_{2N+1}(z) &= p_{2N}(z) \left(z - z^*\right)\qquad \mbox{where}~z^* = \arg\max_{|z|=1} |p_{2N}(z)| \\
p_{2N+2}(z) &= p_{2N+1}(z) \left(z-\overline{z^*}\right) \quad \mbox{where}~z^* = \arg\max_{|z|=1} |p_{2N}(z)|.
 \end{align*}
The polynomial $p_{N}$ will then have $N$ roots of which at least the first $N-1$ come in symmetric pairs. 
We now state the main result: the polynomial $p_{N}$ has $N$ roots $z_1, \dots, z_N$ on the unit circle to which we associate $N$ real numbers $x_1, \dots, x_N$ via
$$ z_k = e^{2\pi i x_k}.$$
The way the polynomials are constructed, we obtain an infinite sequence $(z_n)_{n=1}^{\infty}$ of roots on the unit circle and an associated infinite sequence $(x_n)_{n=1}^{\infty}$ in $[0,1]$.

\begin{theorem}
The sequence $(x_n)_{n=1}^{\infty}$ satisfies
$$ \frac{1}{N}\sum_{n=1}^{N} n\|D_n\|_{L^1} \lesssim  (\log{N})^2.$$
Here, the implicit constant is universal once $N$ is sufficiently large.
\end{theorem}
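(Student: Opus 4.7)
The plan is to combine an energy-telescoping argument with Wagner's potential-theoretic inequality (\S 2.2). Set
$$f_N(x) \;=\; \sum_{j=1}^N \log\bigl|2\sin(\pi(x - x_j))\bigr|, \qquad E_N \;=\; \sum_{1 \leq i < j \leq N} \log\bigl|2\sin(\pi(x_i - x_j))\bigr|,$$
so that adding the point $x_{N+1}$ gives $E_{N+1} - E_N = f_N(x_{N+1})$.

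By the greedy construction, at every odd symmetric-Leja step $2k \to 2k+1$ one has $E_{2k+1} - E_{2k} = f_{2k}(x^*_k) = \max_x f_{2k}(x)$, where $x^*_k = x_{2k+1}$. The forced-conjugate step $2k+1 \to 2k+2$ adds $1 - x^*_k$ and contributes $E_{2k+2} - E_{2k+1} = f_{2k}(1 - x^*_k) + \log|2\sin(\pi(1 - 2 x^*_k))|$. Since the configuration after $2k$ steps is symmetric under $x \mapsto 1-x$, one checks $f_{2k}(1-y) = f_{2k}(y)$, so the first term again equals $\max_x f_{2k}(x)$. Telescoping,
$$E_N - E_m \;=\; 2 \sum_{k} \max_x f_{2k}(x) \;+\; \sum_{k} \log\bigl|2\sin(\pi(1 - 2 x^*_k))\bigr|.$$
The classical Fekete bound gives $E_N \leq \tfrac{1}{2} N \log N + O(N)$ for any $N$-point configuration on the unit circle, while the last sum is $\geq -O(N \log N)$ by a counting argument (the Leja property forces the minimum spacing of the $x^*_k$ to be $\gtrsim 1/N$, so their distances $|1 - 2 x^*_k|$ cannot cluster arbitrarily near zero). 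Together with the trivial $\max f_{2k+1} \leq \max f_{2k} + \log 2$ for the odd-indexed contributions,
$$\sum_{n=m}^{N-1} \max_x f_n(x) \;\lesssim\; N \log N.$$

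The core non-elementary input is Wagner's inequality from \S 2.2, which in the form needed here reads
$$\max_x f_n(x) \;\gtrsim\; \frac{n \|D_n\|_{L^1}}{\log n}.$$
Substituting and using $\log n \leq \log N$ for $n \leq N$,
$$\sum_{n=m}^{N-1} n \|D_n\|_{L^1} \;\leq\; \log N \cdot \sum_{n=m}^{N-1} \frac{n \|D_n\|_{L^1}}{\log n} \;\lesssim\; N (\log N)^2,$$
which is the theorem after dividing by $N$.

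The main obstacle is Wagner's inequality; the other pieces (Fekete's upper bound on $E_N$, the telescoping, and the counting bound on $\sum \log|2\sin(\pi(1-2x^*_k))|$) are essentially bookkeeping. The $(\log n)^{-1}$ loss in Wagner's inequality---an unavoidable feature of passing from the sup of a logarithmic potential to the $L^1$ discrepancy of the underlying measure---is precisely what turns the $N \log N$ energy budget into the $N(\log N)^2$ bound.
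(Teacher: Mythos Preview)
Your overall strategy---telescope the Fekete energy, observe that each greedy increment dominates $\tfrac12\|f_n\|_{L^1}$, and feed this into Wagner's inequality---is exactly the paper's route. You are in fact more careful than the paper in one respect: you explicitly isolate the ``within-pair'' contribution $\sum_k \log|2\sin(\pi(1-2x^*_k))| = \sum_k \log|z^*_k - \overline{z^*_k}|$ coming from the forced conjugate step. The paper simply writes ``the same argument applies'' and then passes from $F_n$ to $D_n$ for even $n$ via Lemma~2, handling odd $n$ through the elementary bound $\|D_{N}-D_{N+1}\|_{L^\infty}\lesssim 1/N$.

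Where your proposal has a genuine gap is the ``counting argument'' you invoke to show this within-pair sum is $\geq -O(N\log N)$. The claim that Leja spacing is $\gtrsim 1/N$ is nontrivial on its own; the easy argument (using $|p_{2k}(z^*_k)|\geq 1$ together with $|z-w|\leq 2$) only yields spacing $\gtrsim 2^{-2k}$, which is far too weak. More importantly, even granting good spacing, that controls the distance from $z^*_k$ to \emph{previously placed} points, whereas the term you need to bound is $|z^*_k-\overline{z^*_k}|$, the distance from $z^*_k$ to its own conjugate, which is being added at the same stage and is not yet a root of $p_{2k}$. Nothing you have written prevents $z^*_k$ from lying very close to the real axis, in which case $\log|z^*_k-\overline{z^*_k}|$ is arbitrarily negative. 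So this step needs a separate argument (or one must reorganize to avoid the within-pair term altogether).

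A second, smaller issue: you apply Wagner's inequality in the form $\max f_n \gtrsim n\|D_n\|_{L^1}/\log n$ for all $n$, but as stated in \S2.2 it requires the underlying set to be symmetric, which holds only for even $n$. The paper's fix is to run Wagner only at even indices and then use $\|D_{n}-D_{n+1}\|_{L^1}\lesssim 1/n$ to transfer to odd indices; you should do the same rather than extend the sum over all $n$ directly.
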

This argument shows, that, for typical values of $N$, we have $\|D_N\|_{L^1} \lesssim N^{-1}(\log{N})^2$ and there cannot be too many exceptions.
This is optimal up to logarithmic factors, it is known that the best uniform rate of equidistribution is $\|D_N\|_{L^1} \lesssim N^{-1}\sqrt{\log{N}}$. It seems reasonable
to assume that this is indeed the correct rate and that our additional factor of $(\log{N})^{3/2}$ is an artifact of the proof. Likewise, it seems reasonable to conjecture that for such sequences we also have $\|D_N\|_{L^{\infty}} \lesssim N^{-1} \log{N}$.\\

\textit{Remark.} The proof also shows something else that is a bit harder to make explicit but easy enough to explain: if we compute the first $N$ elements of the sequence and then, say, manually add $\sim \mathcal{O}(1)$ arbitrary new points of our own choosing (as long as they are distinct from the existing points) before resuming the greedy construction, this does not change the asymptotic behavior. Or, alternatively, if we compute the first $N$ elements of the sequence and then manually alter $\sim \mathcal{O}(1)$ elements of those first $N$ elements in such a way that all points remain distinct this still does not impact the asymptotic behavior: \textit{the mechanism is stable and automatically adjusts}!

\subsection{An Estimate of Wagner.}
A crucial ingredient in our proof is an estimate of Wagner. We first state Wagner's estimate \cite{wagner} in a somewhat specialized setting (which allows us to phrase it in terms of the discrepancy). It states that the regularity of a set of points $\left\{x_1, \dots, x_N\right\} \subset [0,1]$ can be detected by looking at the size of the polynomial
$$ p_N(z) = \prod_{k=1}^{N}{ (z - e^{2\pi i x_k})}.$$
If the set of points is not very evenly distributed, then the $L^1-$norm of the polynomial restricted to the unit circle $\| \log p_N(e^{it})\|_{L^1}$ will be large.
Conversely, if $\| \log p_N(e^{it})\|_{L^1}$ is small, then the set $\left\{x_1, \dots, x_N\right\}$ has to be evenly distributed.
\begin{thm}[Wagner \cite{wagner}] Let $\left\{x_1, \dots, x_N\right\}$ be a symmetric set, i.e. $N$ is even and $x_{2k+2} = 1-x_{2k+1}$. Then
$$ \int_{0}^{1} \left| \sum_{ k=1}^{N} \log \left|2 \sin{(\pi (x - x_k))} \right| \right| dx \gtrsim \frac{N }{\log{N}}\| D_N\|_{L^1}.$$ 
\end{thm}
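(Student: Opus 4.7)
The plan is to identify $\phi_N$ and $\pi N D_N$ as essentially Hilbert-conjugate functions on the circle, and then to quantify the resulting $L^1\to L^1$ loss via a smoothing argument. First I would compute the Fourier series of both sides using $\log|2\sin(\pi y)|=-\sum_{k\geq 1}k^{-1}\cos(2\pi k y)$ together with the Fourier expansion of $\mathbf{1}_{[0,x]}$. Writing $\sum_j e^{2\pi i k x_j}=S_k+iT_k$, one gets $\phi_N(x)=-\sum_k k^{-1}(S_k\cos(2\pi k x)+T_k\sin(2\pi k x))$ and (after routine manipulation) $\pi N D_N(x)=\sum_k k^{-1}(S_k\sin(2\pi k x)-T_k\cos(2\pi k x))$. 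The symmetry hypothesis $x_{2j+2}=1-x_{2j+1}$ forces $T_k=0$, so both series are driven by the same coefficients $S_k$, and one reads off the key identity $\pi N D_N=-H\phi_N$, where $H$ denotes the periodic conjugate-function operator.

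The next step is duality with a smoothed sign. Take $g=\operatorname{sgn}(D_N)$, so that $\int D_N g=\|D_N\|_{L^1}$. Because $D_N$ is piecewise linear with slope $-1$ between $N$ upward jumps of size $1/N$, it has at most $2N+1$ sign changes. Convolving with a smooth bump $\rho_\epsilon$ of width $\epsilon$ and unit mass produces $g_\epsilon:=g*\rho_\epsilon$ with $\|g_\epsilon\|_\infty\leq 1$ and $\|g-g_\epsilon\|_{L^1}\lesssim N\epsilon$; since $\|D_N\|_\infty\leq 1$ one finds $\bigl|\int D_N g_\epsilon-\|D_N\|_{L^1}\bigr|\lesssim N\epsilon$. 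Step~1 and the antisymmetry of $H$ then give
$$ \pi N\int D_N g_\epsilon=-\int (H\phi_N)\,g_\epsilon=\int \phi_N\cdot Hg_\epsilon, $$
and because $H$ commutes with convolution, $Hg_\epsilon=g*(H\rho_\epsilon)$, hence $\|Hg_\epsilon\|_\infty\leq \|H\rho_\epsilon\|_{L^1}$. A direct computation on the circle (the kernel $H\rho_\epsilon$ behaves like $\tfrac12\cot(\pi x)$ away from the support and is $O(1/\epsilon)$ on it) yields $\|H\rho_\epsilon\|_{L^1}\lesssim \log(1/\epsilon)$.

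Combining the three bounds gives
$$ \pi N\|D_N\|_{L^1}\lesssim \log(1/\epsilon)\,\|\phi_N\|_{L^1}+N^2\epsilon, $$
and the choice $\epsilon=1/N^2$ produces $N\|D_N\|_{L^1}\lesssim \log N\cdot\|\phi_N\|_{L^1}+O(1)$, which is the desired inequality. The additive $O(1)$ is absorbed using a soft universal lower bound $\|\phi_N\|_{L^1}\gtrsim 1$ valid for any configuration of $N\geq 2$ distinct points; this follows from the Mahler-type identity $\int_0^1\phi_N=0$ together with the fact that $\phi_N$ has $N$ genuine logarithmic singularities and its positive part is pointwise bounded (by $N\log 2$, via AM--GM), forcing a quantitative amount of positive mass.

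The main technical obstacle is the $L^\infty$ control of $Hg_\epsilon$. The raw Hilbert transform $H\operatorname{sgn}(D_N)$ carries a logarithmic blowup at each of the $O(N)$ sign changes of $D_N$, so a direct duality argument against $\|g\|_\infty\leq 1$ breaks down. Smoothing trades each blowup for a $\log(1/\epsilon)$ plateau at the cost of an $L^1$ error $\sim N\epsilon$; balancing at $\epsilon\sim 1/N^2$ yields precisely the $\log N$ loss that is responsible for the $1/\log N$ factor in Wagner's inequality, mirroring the familiar unboundedness of $H\colon L^1\to L^1$.
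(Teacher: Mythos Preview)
Your argument follows the same skeleton as Wagner's (reproduced in the paper): recognize $\pi N D_N$ and $\phi_N:=\sum_k\log|2\sin\pi(\cdot-x_k)|$ as conjugate trigonometric series, pair $D_N$ against a bounded test function, transfer to $\phi_N$ via the conjugate-function identity, and then bound the $L^\infty$ norm of the conjugate of the test function by $\log N$. The substantive difference is the choice of test function. Wagner (and the paper) first prove a structural lemma: partition $[0,1]$ into $48N$ equal subintervals and locate a subfamily $\mathcal A$ on which $D_N$ is essentially constant and which already captures a fixed fraction of $\|D_N\|_{L^1}$; the test function $H$ is then a sum of hat functions of width $(48N)^{-1}$ sitting on those intervals, and one checks $\|\tilde H\|_{L^\infty}\lesssim\log N$ directly. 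Your smoothed-sign construction bypasses the structural lemma entirely, which is a genuine simplification.

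The price you pay is the additive $O(1)$ term, and here there is a gap. Your absorption relies on $\|\phi_N\|_{L^1}\gtrsim 1$, but the sketch you give (``$N$ genuine logarithmic singularities \dots\ forcing a quantitative amount of positive mass'') does not establish this: since the other $N-1$ summands can contribute up to $(N-1)\log 2$, the set $\{\phi_N<0\}$ around each $x_k$ may have measure only $\sim 2^{-N}$, so the singularities by themselves contribute at most $\sim N^{2}2^{-N}$ to $\|\phi_N^-\|_{L^1}$. The claim is in fact true (the minimum over all $N$-point configurations is attained at equally spaced points, where $\phi_N(x)=\log|2\sin(\pi Nx)|$ and $\|\phi_N\|_{L^1}$ equals a fixed positive constant), but it requires a separate argument that you have not supplied. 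Wagner's hat-function construction avoids this issue altogether: because $\int D_N H\gtrsim\|D_N\|_{L^1}$ holds exactly rather than up to an $O(N\epsilon)$ error, no additive constant ever appears.

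One minor correction on the role of symmetry: the conjugate identity $-H\phi_N=\pi\sum_k f(\cdot-x_k)$ holds for \emph{any} configuration, regardless of whether $T_k=0$. What symmetry actually buys is $\sum_k x_k=N/2$, so that $\sum_k f(\cdot-x_k)$ coincides with $N D_N$ on the nose rather than differing from it by the constant $N/2-\sum_k x_k$.
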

It is not clear whether the factor $\log{N}$ in this inequality can be removed. Wagner remarks that `[...] holds even if the factor $1/\log{N}$ is omitted, but we could not prove this in general' \cite{wagner}.   This would be interesting to know: in particular, it would lead to a logarithmic improvement of Theorem 1 and Theorem 2. We remark that by increasing the $L^1-$norm to $L^2$, we obtain an identity.
\begin{proposition} Let $\left\{x_1, \dots, x_N\right\}$ be a symmetric set, i.e. $N$ is even and $x_{2k+2} = 1-x_{2k+1}$. Then
$$ \int_{0}^{1} \left| \sum_{ k=1}^{N} \log \left|2 \sin{(\pi (x - x_k))} \right| \right|^2 dx  = \pi^2 N^2 \| D_N\|^2_{L^2}.$$ 
\end{proposition}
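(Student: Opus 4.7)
The plan is to prove the identity by applying Parseval on each side. The ingredient on the left is the classical Fourier expansion
$$ \log|2\sin(\pi y)| = -\sum_{\ell=1}^{\infty} \frac{\cos(2\pi \ell y)}{\ell},$$
which has zero mean on $[0,1]$. Setting $T(x) := \sum_{k=1}^N \log|2\sin(\pi(x-x_k))|$ and $S_\ell := \sum_{k=1}^N e^{2\pi i \ell x_k}$ and interchanging summations gives
$$ T(x) = -\sum_{\ell=1}^{\infty}\frac{1}{\ell}\operatorname{Re}\!\left(e^{2\pi i \ell x}\,\overline{S_\ell}\right) = -\sum_{\ell=1}^{\infty}\frac{1}{\ell}\left[\operatorname{Re}(S_\ell)\cos(2\pi\ell x)+\operatorname{Im}(S_\ell)\sin(2\pi\ell x)\right].$$
Orthogonality of the trigonometric system on $[0,1]$ (with $\int_0^1\cos^2=\int_0^1\sin^2=1/2$) then yields
$$ \int_0^1 T(x)^2\,dx = \frac{1}{2}\sum_{\ell=1}^{\infty}\frac{|S_\ell|^2}{\ell^2}.$$

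For the right-hand side I would Fourier-expand $D_N$ directly. Writing $D_N(x) = \frac{1}{N}\sum_k \mathbf{1}_{[x_k,1]}(x) - x$, a short computation (integration by parts on each indicator, followed by a cancellation with the Fourier coefficient of $-x$) gives $\widehat{D_N}(\ell) = \overline{S_\ell}/(2\pi i \ell N)$ for every $\ell\neq 0$. The symmetry hypothesis is used here, and only here, to force $\widehat{D_N}(0) = 0$: since the $N/2$ pairs $\{x_{2k+1},x_{2k+2}\}$ each sum to $1$, one has $\sum_k x_k = N/2$ and hence $\int_0^1 D_N\,dx = \tfrac{1}{N}\sum_k(1-x_k)-\tfrac12 = 0$. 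Parseval then gives
$$ \|D_N\|_{L^2}^2 = \sum_{\ell\neq 0}|\widehat{D_N}(\ell)|^2 = \frac{1}{2\pi^2 N^2}\sum_{\ell=1}^{\infty}\frac{|S_\ell|^2}{\ell^2},$$
and multiplying through by $\pi^2 N^2$ matches the first calculation on the nose.

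There is no genuine obstacle: the proposition reduces to two applications of Parseval, and the only role of the symmetry assumption is to kill the zeroth Fourier coefficient of $D_N$ (without it the right-hand side would pick up an additive term $\pi^2 N^2(\int_0^1 D_N\,dx)^2$). It is worth observing that $\tfrac{1}{2\pi^2 N^2}\sum_{\ell\ge 1}|S_\ell|^2/\ell^2$ is exactly the squared diaphony $\|F_N\|_{L^2}^2$ defined in the introduction, so the proposition can be read as the statement $\int_0^1 T(x)^2\,dx = \pi^2 N^2 \|F_N\|_{L^2}^2$, which under the symmetry hypothesis collapses to the claimed identity with $\|D_N\|_{L^2}^2$.
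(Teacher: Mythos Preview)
Your proof is correct and follows the same route as the paper: expand $\log|2\sin(\pi\cdot)|$ as a cosine series, apply Parseval to get $\int_0^1 T^2 = \tfrac12\sum_{\ell\ge 1}|S_\ell|^2/\ell^2$, and then recognize this as $\pi^2 N^2\|D_N\|_{L^2}^2$. The only cosmetic difference is that the paper routes the right-hand side through $F_N$ (noting $\|F_N\|_{L^2}^2=\tfrac{1}{2\pi^2 N^2}\sum_\ell|S_\ell|^2/\ell^2$ and invoking Lemma~2 to identify $F_N$ with the signed discrepancy for symmetric sets), whereas you compute $\widehat{D_N}(\ell)$ directly; you yourself note this equivalence in your final paragraph.
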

We mention a more general version (not requiring symmetry) in the next section; this will  be useful in the analysis of an Erd\H{o}s problem in \S 2.4.

\subsection{General Leja points.} We can phrase all our results without imposing symmetry on the points when resorting to a different measure of regularity: let $f:\mathbb{R} \rightarrow \mathbb{R}$ denote the one-periodic function
$$ f(x) = \frac{1}{2} - \left\{x \right\},$$
where $\left\{x\right\} = x - \left\lfloor x \right\rfloor$. We define 
$$F_N(x) = \frac{1}{N}\sum_{k=1}^{N}{f(x-x_k)}$$
and use $\|F_N\|_{L^2}$ as a measure of regularity of the point set. Note that
$$ F_N(x) = \frac{i}{2\pi} \sum_{\ell \in \mathbb{Z} \atop \ell \neq 0} \frac{1}{\ell} \left| \frac{1}{N} \sum_{k=1}^{N} e^{-2 \pi i \ell x_k} \right| e^{2\pi i \ell x}$$
and thus
$$ \|F_N\|_{L^2}^2 = \frac{1}{2\pi^2} \sum_{k=1}^{\infty} \frac{1}{\ell^2}  \left| \frac{1}{N} \sum_{k=1}^{N} e^{2 \pi i \ell x_k} \right|^2$$
is, up to universal constants, comparable to the diaphony (alternatively, the $\dot H^{-1}$ norm of the measure given by the sum of Dirac measures placed in the point set).
We then proceed as above but without the additional symmetry. Let $z_1, \dots, z_m$ be $m$ distinct complex numbers normalized to $|z_k| = 1$ and consider the polynomial
$$ p_{m}(z) = \prod_{k=1}^{m}{(z-z_k)}.$$
We introduce a sequence of polynomials in a greedy fashion: 
$$ p_{N+1}(z) = p_{N}(z) \left(z - z^*\right) \qquad \mbox{where} \quad z^* = \arg\max_{|z|=1} |p_{N}(z)|.$$
If the maximum is not unique, any choice of location of maximum is admissible.
 $p_{N}$ has $N$ roots $z_1, \dots, z_N$ to which we associate $N$ real numbers $x_1, \dots, x_N$ via
$$ z_k = e^{2\pi i x_k}.$$
The way the polynomials are constructed, we obtain an infinite sequence $(z_n)_{n=1}^{\infty}$ of roots on the unit circle and an associated infinite sequence $(x_n)_{n=1}^{\infty}$ in $[0,1]$.

\begin{theorem}
The sequence $(x_n)_{n=1}^{\infty}$ satisfies
$$ \frac{1}{N}\sum_{n=1}^{N} n\|F_n\|_{L^1} \lesssim  (\log{N})^2.$$
Here, the implicit constant is universal once $N$ is sufficiently large.
\end{theorem}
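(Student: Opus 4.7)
The argument follows the same template as Theorem~1, with the $L^1$ discrepancy $\|D_N\|_{L^1}$ replaced by $\|F_N\|_{L^1}$. Writing $z_k=e^{2\pi i x_k}$, set
\[
G_N(x):=\sum_{k=1}^N \log\bigl|2\sin(\pi(x-x_k))\bigr|=\log\bigl|p_N(e^{2\pi ix})\bigr|,
\]
so that the Leja rule selects $x_{N+1}\in\arg\max G_N$. Write $M_N:=\max_x G_N(x)=G_N(x_{N+1})$.

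The plan has three ingredients. First, I would invoke a non-symmetric analogue of Wagner's inequality,
\[
\|F_N\|_{L^1}\lesssim \frac{\log N}{N}\,\|G_N\|_{L^1},
\]
which is the ``more general version'' flagged at the end of \S2.2 and is the principal technical step. I expect to prove it by recasting Wagner's argument around the Fourier expansion of $F_N$---so that $\pi N\,F_N$ becomes, up to sign, the conjugate function of $G_N$ on the circle---bypassing the symmetric pairing used in the original proof. Second, because $\int_0^1 G_N\,dx=\log|p_N(0)|=0$, splitting $G_N$ into positive and negative parts yields
\[
\|G_N\|_{L^1}=2\int_0^1 G_N^+(x)\,dx\leq 2M_N,
\]
since $G_N^+\leq M_N$ on the unit interval. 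Third, the Leja construction telescopes the pairwise log-energy $V_N:=\sum_{1\leq i<j\leq N}\log|z_i-z_j|$: each greedy step satisfies $V_{n+1}-V_n=M_n$, so $\sum_{n=m}^{N-1} M_n=V_N-V_m$. The classical potential-theoretic bound on the unit circle (whose extremizers are the $N$-th roots of unity, for which $\prod_{i<j}|z_i-z_j|^2=N^N$) gives $V_N\leq \tfrac12 N\log N$ for every configuration, regardless of the initial $m$ points.

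Combining these,
\[
\sum_{n=1}^N n\|F_n\|_{L^1}\lesssim \sum_{n=1}^N \log n\cdot \|G_n\|_{L^1}\leq 2\log N\sum_{n=1}^N M_n\lesssim N(\log N)^2,
\]
and dividing by $N$ yields the result; the first $m$ terms of the sum contribute only a constant depending on $p_m$, which is absorbed once $N$ is large. The stability remark after Theorem~1 is then transparent: inserting, removing, or rearranging $O(1)$ points shifts $V_n$ by a bounded amount but does not touch the Fekete ceiling, so the telescoping sum of $M_n$ keeps the same order of magnitude. The main obstacle I anticipate is the first ingredient: symmetry is used nontrivially in Wagner's pairing argument to avoid the failure of the conjugate-function bound at the $L^\infty\to L^1$ endpoint, and dispensing with it appears to require a careful endpoint argument on the Fourier side that exploits the fact that $G_N$ is not an arbitrary integrable function but the log-modulus of a polynomial with roots on the circle.
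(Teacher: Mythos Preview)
Your proposal is correct and matches the paper's argument essentially step for step: the Fekete bound (Lemma~1), the telescoping $\sum M_n = V_N - V_m$, the mean-zero observation $\|G_N\|_{L^1}\le 2M_N$, and the Wagner-type inequality. Your anticipated ``main obstacle'' is not one: in Wagner's argument the symmetry assumption is used solely to identify $N\,D_N$ with $\sum_k f(\,\cdot-x_k)$ (Lemma~2), but $N\,F_N$ equals this sum by definition, so the conjugate-function identity and the structural Lemma~3 (which only needs that $N\,F_N$ is piecewise linear with negative slope, unit jumps, and mean zero) go through verbatim for arbitrary point sets with no additional endpoint work.
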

This implies that for a typical value of $N$ we have $\|F_N\|_{L^1} \lesssim (\log{N})^2/N$. This is at most a factor of $(\log{N})^{3/2}$ from
optimal and it seems reasonable to believe that this loss of logarithms is an artifact of the proof. We conclude by remarking that for any $\left\{x_1, \dots, x_N\right\} \subset [0,1]$,
$$ \int_{0}^{1} \left|\frac{1}{N} \sum_{ k=1}^{N} \log \left|2 \sin{(\pi (x - x_k))} \right| \right|^2 dx = \pi^2  N^2\| F_N\|^2_{L^2}  .$$

\subsection{A Problem of Erd\H{o}s.} A problem of Erd\H{o}s \cite{erdos} is as follows: let $(z_n)_{n=1}^{\infty}$ be an infinite sequence of complex numbers on the unit circle, $|z_k| = 1$, and define the associated sequence
$$ a_N = \max_{|z|=1} \prod_{k=1}^{N}{|z-z_k|}.$$
Can the sequence $a_N$ be uniformly bounded in $N$? Hayman (see \cite{erd2}) observed that there is a sequence $(z_n)_{n=1}^{\infty}$ such that
$a_N \leq N$ and this was improved by Linden \cite{linden} to $a_N \leq N^{1-\delta}$.
The question was then answered by Wagner \cite{wagner0} who showed that
$$ \max_{1 \leq n \leq N} a_n \gtrsim (\log{N})^c,$$
where $c>0$ is an absolute constant. Beck \cite{beck2} obtained the optimal rate
$$ \max_{1 \leq n \leq N} a_n \gtrsim N^c.$$
Rephrased in a different notation, for any infinite sequence $(z_n)_{n=1}^{\infty}$ of normalized complex numbers, $|z_k|=1$, Beck's result \cite{beck2} can be phrased as
$$ \left\| \log{ \prod_{k=1}^{N}{(z-z_k)} } \right\|_{L^{\infty}(|z|=1)} \gtrsim \log{N} \qquad \mbox{for infinitely many}~N.$$
Moreover, this is the optimal rate. We can prove an analogous version when the size of the polynomial $p_N$ is being measured in $L^2$ instead of $L^{\infty}$.

\begin{theorem} For any infinite sequence $(z_n)_{n=1}^{\infty} $ with $|z_k|=1$, we have
$$ \left\| \log{ \prod_{k=1}^{N}{(z-z_k)} } \right\|_{L^{2}(|z|=1)}   \gtrsim \sqrt{ \log{N}} \qquad \emph{for infinitely many}~N.$$
\end{theorem}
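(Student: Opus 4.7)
The plan is to use the Fourier identity from the end of Section~2.3,
\[ \Big\|\log\prod_{k=1}^{N}(z-z_{k})\Big\|_{L^{2}(|z|=1)}^{2}=\pi^{2}N^{2}\|F_{N}\|_{L^{2}}^{2}=\tfrac{1}{2}\sum_{\ell\geq 1}\frac{|S_{N}(\ell)|^{2}}{\ell^{2}}, \]
where $z_{k}=e^{2\pi ix_{k}}$ and $S_{N}(\ell)=\sum_{k\leq N}e^{2\pi i\ell x_{k}}$, to reduce the claim to the lower bound $N^{2}\|F_{N}\|_{L^{2}}^{2}\gtrsim\log N$ for infinitely many $N$, i.e., an $L^{2}$-diaphony lower bound for any infinite sequence in $[0,1]$ (the $L^{2}$ analogue of Schmidt's theorem).

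A first attempt uses only the fact that each $S_{N}(\ell)$ is a complex walk with unit increments: from $|S_{N}(\ell)-S_{N-1}(\ell)|=1$ one gets $|S_{N-1}(\ell)|^{2}+|S_{N}(\ell)|^{2}\geq 1/2$, and hence $\sum_{N=1}^{M}|S_{N}(\ell)|^{2}\gtrsim M$ for every $\ell\neq 0$. Summed against $1/\ell^{2}$ this yields $\sum_{N=1}^{M}\|\log|p_{N}|\|_{L^{2}}^{2}\gtrsim M$, which only gives the constant lower bound $\|\log|p_{N}|\|_{L^{2}}^{2}\gtrsim 1$ for infinitely many $N$; the missing $\log N$ factor is precisely what makes the problem delicate.

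To recover that factor I would perform a Roth-type $L^{2}$ reduction. To the infinite sequence associate the 2D point set $P_{N}=\{(n/N,x_{n}):1\leq n\leq N\}\subset[0,1]^{2}$ and apply Roth's lower bound for the (centered) $L^{2}$-discrepancy of $P_{N}$,
\[ \|\operatorname{disc}(P_{N})\|_{L^{2}([0,1]^{2})}\gtrsim \frac{\sqrt{\log N}}{N}. \]
Expanding this 2D $L^{2}$ norm by Parseval, using the sawtooth $f(x)=\tfrac{1}{2}-\{x\}$ of Section~2.3 in each coordinate, and carrying out the integration in the first coordinate using its piecewise-constant structure on $[n/N,(n+1)/N)$, converts Roth's bound into the averaged 1D statement
\[ \sum_{n=1}^{N} n^{2}\,\|F_{n}\|_{L^{2}}^{2}\;\gtrsim\; N\log N. \]
By pigeonhole there is some $n\in[1,N]$ with $n^{2}\|F_{n}\|_{L^{2}}^{2}\gtrsim \log N\geq\log n$, and since $N$ is arbitrary this produces infinitely many such $n$; by the identity of Step~1, the theorem follows.

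The main obstacle is the careful implementation of the 2D-to-1D Fourier transfer: one must choose the test functions in the 2D Parseval expansion so that integrating in the first coordinate yields the \emph{centered} diaphony $\|F_{n}\|_{L^{2}}$ (which is what the identity from Section~2.3 couples back to $\|\log|p_{n}|\|_{L^{2}}$), rather than the plain discrepancy $\|D_{n}\|_{L^{2}}$, whose residual mean term $\bar D_{n}$ could in principle absorb all of the logarithmic gain produced by Roth's theorem. Using Roth's theorem directly in its centered/diaphony form, so that the centering is built into the inequality from the outset, appears to be the cleanest way to avoid this loss.
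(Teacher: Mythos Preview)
Your approach is correct and matches the paper's: both use the identity $\bigl\|\log|p_N|\bigr\|_{L^2}^2=\pi^2 N^2\|F_N\|_{L^2}^2$ and then the diaphony lower bound $\|F_N\|_{L^2}\gtrsim\sqrt{\log N}/N$ for infinitely many~$N$. The paper simply cites the latter as Proinov's theorem \cite{proinov} (see also \cite{kirk}), whereas you sketch its standard proof via the Roth correspondence $\{(n/N,x_n)\}\subset[0,1]^2$; your concern about centering is exactly the point Proinov's diaphony formulation handles.
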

This is the optimal rate (up to the value of the implicit constant).

\subsection{Open Questions}
There are several questions regarding the optimality of these results (say, whether it is possible to improve Theorem 1 and 2 by a logarithmic factor). However, the main question seems to be the following
\begin{quote}
\textbf{Question.} On what domains can potential-theoretic sequences be used to obtain regularly distributed sequences?
\end{quote}
Leja sequences in subsets $K \subset \mathbb{C}$ in the complex plane are defined via
$$ z_{N+1} = \arg \max_{z \in K} \prod_{k=1}^{N}{|z-z_k|}.$$
We do not necessarily expect this definition to be very promising in terms of uniform distribution (on the unit interval, for example, their limiting density is not uniform \cite{reichel}). However, as is the main point of our paper, they do indeed result in highly regular sequences when constructed on the unit circle. What is special about the unit circle? Perhaps it is the following algebraic `coincidence': when re-parametrized to the unit interval, the definition of the sequence is
$$ x_{N+1} = \arg \max_{0 \leq x \leq 1}  \sum_{k=1}^{N}\log{|2\sin{(\pi |x-x_k|)}|}.$$
At the same, this mysterious function $\log{(2\left|\sin{(\pi x)}\right|)}$ arises naturally as an infinite Fourier series
$$ \sum_{k=1}^{\infty}{\frac{\cos{(2 \pi k x)}}{k}} = -\log{(2\left|\sin{(\pi x)}\right|)}.$$
This infinite Fourier series, in turn, corresponds to the definition of the inverse fractional Laplacian (possibly up to constants) on $[0,1]$ since
$$ \left[ (-\Delta)^{-\frac{1}{2}} \delta_x\right](y)=  \sum_{k=1}^{\infty}{\frac{\cos{(2 \pi k(x-y))}}{k}}.$$
At this point, one could wonder: is maybe the inverse Laplacian doing all the work? The fractional power seems unusual, what, for example, if one were to choose the standard inverse Laplacian instead? This is
$$ \left[ (-\Delta)^{-1} \delta_x\right](y)=  \sum_{k=1}^{\infty}{\frac{\cos{(2 \pi k (x-y))}}{ (2 \pi k)^2}} = \frac{1}{4} \left[ |x-y|^2 - |x-y| + \frac{1}{6} \right],$$
where the polynomial is the second Bernoulli polynomial. Brown and the author \cite{brown} have investigated this problem. Numerics suggest that there is very little difference in the behavior of the sequences: the sequence
$$ x_{N+1} = \arg \min_{x}  \left( (- \Delta)^{-1}  \sum_{k=1}^{N}{\delta_{x_k}} \right)(x)$$
seems to also enjoy good distribution properties. \cite{brown} also established some sharp optimality results in terms of Wasserstein distance in dimensions $d\geq 3$ on general compact manifolds -- however, the behavior in terms of discrepancy remains poorly understood (even for $(-\Delta)^{-1}$, given by a quadratic polynomial, on the unit interval). This naturally leads one to wonder whether such potential-theoretic constructions have a chance of having optimal rate of regularity. 
\subsection{Related results.}
 Leja sequences first arose in the work of Edrei \cite{edrei} and, independently, Leja \cite{leja}. They are simply defined in a greedy fashion by having the next element maximize the product of the distances to the existing elements. They are known to be excellent points for polynomial interpolation of analytic functions.
To the best of our knowledge, the existing results on $\left\{z \in \mathbb{C}: |z|=1\right\}$ are usually stated only for the case where a Leja sequence is initialized with a single element $x_1$.
  Bialas-Ciez \& Calvi \cite{ci} proved that if one constructs such a sequence from a single initial element, then the arising sequence is highly structured. Pausinger \cite{pausinger} characterized the arising sequences (since there are often several maxima, there is an ambiguity in which one to pick and one can obtain several sequences); he also showed that this characterization also holds true for a larger class of notions of energy. Lopez-Garcia \& Wagner \cite{lopez} established energy asymptotics.
G\"otz \cite{gotz}, building on earlier machinery of Andrievski \& Blatt  \cite{and1,and2}, Blatt \cite{blatt}, Blatt \& Mhaskar \cite{blatt2} and Totik \cite{totik}, proved that $\|D_N\|_{L^{\infty}} \lesssim N^{-1/2} \log{N}$. The author (unknowingly) recovered this bound in a different setting \cite{steini} (the guiding motivation was to interpret the Erd\H{o}s-Turan inequality as an energy functional; this perspective was also useful in \cite{stein}). A philosophically related object was studied in \cite{aist, hla1, hla2}.
We also refer to Baglama, Calvetti \& Reichel \cite{baglama}, Lopez-Garcia \& Saff \cite{lopez0}, Pritsker \cite{pritsker, pritsker2} and \cite{and3, hardin, saff, wagner1, wagner2}.\\

\section{Proofs}
\subsection{Two Lemmata.}

\begin{lemma} Let $\left\{x_1, \dots, x_n\right\} \subset [0,1]$. Then 
$$ \sum_{i,j=1 \atop i \neq j }^{N}{\log{|2\sin{(\pi |x_i-x_j|)}|}} \leq N \log{N}.$$
\end{lemma}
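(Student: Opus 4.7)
My plan is to reinterpret the sum as the log of a Vandermonde determinant and then apply Hadamard's inequality. First, I would pass from the points $x_k \in [0,1]$ to the points $z_k = e^{2\pi i x_k}$ on the unit circle. The elementary identity
$$ |e^{2\pi i x_i} - e^{2\pi i x_j}| = 2|\sin(\pi(x_i - x_j))|$$
turns the left-hand side into
$$ \sum_{i \neq j} \log|z_i - z_j| \;=\; 2 \sum_{i<j} \log|z_i - z_j| \;=\; \log \prod_{i<j} |z_i - z_j|^2.$$

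The next step is to recognize $\prod_{i<j}(z_j - z_i)$ as the Vandermonde determinant of the matrix $V \in \mathbb{C}^{N \times N}$ with entries $V_{ij} = z_i^{j-1}$. Hence it suffices to show $|\det V|^2 \leq N^N$, i.e.\ $|\det V| \leq N^{N/2}$. This falls out immediately from Hadamard's inequality: since $|z_i| = 1$, each row of $V$ has Euclidean norm exactly $\sqrt{N}$, and Hadamard gives $|\det V| \leq \prod_{i=1}^{N} \sqrt{N} = N^{N/2}$. Taking logs yields the claimed bound $\sum_{i \neq j} \log|2\sin(\pi|x_i-x_j|)| \leq N \log N$.

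There is no real obstacle here — the whole proof is essentially one line once the Vandermonde structure is spotted. As a sanity check, equality in Hadamard occurs exactly when the rows of $V$ are orthogonal, which happens precisely when $\{z_1,\dots,z_N\}$ are $N$-th roots of unity (up to a common rotation), in which case $V/\sqrt{N}$ is the unitary DFT matrix; this is consistent with the fact that equispaced points on the circle are the maximizers of the logarithmic energy.
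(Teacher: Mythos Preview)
Your proof is correct and, in fact, cleaner than the argument given in the paper. Both begin the same way, passing to $z_k = e^{2\pi i x_k}$ and rewriting the sum as $\log \prod_{i\neq j}|z_i - z_j|$. The paper then appeals to the classical Fekete problem, quoting the fact that this product is maximized on the circle by the $N$-th roots of unity, and finishes using the identity $\prod_{k=1}^{N-1}\sin(\pi k/N) = 2N/2^N$. Your route via the Vandermonde determinant and Hadamard's inequality is more self-contained: it proves the sharp bound $\prod_{i<j}|z_i-z_j|^2 \le N^N$ directly, without needing to know in advance that equispaced points are extremal or to invoke the sine-product identity. The paper's argument has the advantage of situating the lemma in the context of Fekete points, but yours gives the result in one stroke and recovers the equality case (roots of unity, where $V/\sqrt{N}$ is unitary) as the Hadamard equality case.
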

\begin{proof} We introduce
$$ z_k = e^{2 \pi i x_k}$$
which allows us to write
$$\sum_{i,j=1 \atop i \neq j }^{N}{\log{|2\sin{(\pi |x_i-x_j|)}|}} = \log  \left( \prod_{i,j=1 \atop i \neq j}^{N}{\|z_i - z_j\|}\right).$$
Maximizing this quantity is the famous problem of Fekete \cite{fekete}: on the circle, it is known to be maximal when the $z_i$ are $N-$th roots of unity. In that case, there exists an amusing identity
$$ \prod_{k=1}^{N-1} \sin\left(\frac{\pi k}{N}\right) = \frac{2N}{2^N}$$
from which the bound follows.
\end{proof}

We introduce the 1-periodic function 
$$ f(x) = 1/2 - \left\{x\right\}.$$
\begin{lemma} Let $\left\{x_1, \dots, x_N\right\}$ be a symmetric set, i.e. $N$ is even and $x_{2k+2} = 1-x_{2k+1}$. Then, for $0<x<1$,
$$ \sum_{k=1}^{N}f(x-x_k) = \# \left\{ 1\leq n \leq N: x_k \leq x\right\} - Nx.$$
\end{lemma}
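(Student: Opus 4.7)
The plan is a direct computation: expand $f(x-x_k)$ case-by-case according to the sign of $x-x_k$, sum, and then invoke the symmetry hypothesis to pin down $\sum_k x_k$.

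First I would fix $0<x<1$ and assume (by slight perturbation/discarding a measure-zero set) that $x \neq x_k$ for all $k$. For such $x$, the quantity $x - x_k$ lies in $(0,1)$ when $x_k < x$ and in $(-1,0)$ when $x_k > x$. Using the definition $f(y) = 1/2 - \{y\}$, this gives
$$
f(x-x_k) = \begin{cases} \tfrac{1}{2} - (x - x_k) & \text{if } x_k < x, \\ -\tfrac{1}{2} - (x - x_k) & \text{if } x_k > x, \end{cases}
$$
since $\{x-x_k\} = x - x_k + 1$ in the second case.

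Setting $A = \#\{1 \leq k \leq N : x_k \leq x\}$, summation yields
$$
\sum_{k=1}^N f(x-x_k) = \tfrac{1}{2}A - \tfrac{1}{2}(N - A) - Nx + \sum_{k=1}^N x_k = A - \tfrac{N}{2} - Nx + \sum_{k=1}^N x_k.
$$
Here the sign pattern on the $1/2$ terms comes from separating the two cases, while the $-x + x_k$ contributions combine uniformly into $-Nx + \sum_k x_k$.

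Finally I would invoke the symmetry hypothesis. Because the points come in $N/2$ pairs $\{x_{2k+1}, x_{2k+2}\}$ with $x_{2k+2} = 1 - x_{2k+1}$, each pair contributes $1$ to the sum, so $\sum_{k=1}^N x_k = N/2$. Substituting cancels the $-N/2$ and leaves $A - Nx$, which is exactly the claimed identity. The only subtlety is the behavior at $x = x_k$ (where $f$ has a jump and $\#\{x_k \leq x\}$ also jumps); this is where I expect a minor cosmetic issue, but since both sides are right-continuous (or the identity can be read in the $L^1$ sense) it presents no real obstacle.
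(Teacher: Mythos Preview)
Your proof is correct and follows essentially the same direct-computation approach as the paper: expand $f(x-x_k)=1/2-\{x-x_k\}$ by cases and sum. The only organizational difference is that the paper groups the points into symmetric pairs $(a,1-a)$ and computes $f(x-a)+f(x-(1-a))=\chi_{a<x}+\chi_{1-a<x}-2x$ via a three-case split before summing, whereas you treat each point individually with a two-case split and invoke the symmetry only at the end through $\sum_k x_k = N/2$; your version is slightly more streamlined but the underlying idea is identical.
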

\begin{proof} We have, for any $0 \leq a \leq 1/2$,
$$ f(x-a) + f(x-(1-a)) = \begin{cases} - 2x \qquad &\mbox{if}~x \leq a \\
1-2\left\{a\right\}  - 2\left(x-1-2\left\{a\right\} \right) \qquad &\mbox{if}~a \leq x \leq 1-a\\
2a - 2(x-(1-a))  \qquad &\mbox{if}~1-a \leq x \leq 1.
\end{cases}$$
This can be rewritten in terms of characteristic functions as
$$ f(x-a) + f(x-(1-a)) = \chi_{a < x} + \chi_{1-a<x} - 2x.$$
The Lemma then follows by summation over all pairs.
\end{proof}

\subsection{Proof of Wagner's estimate} We include a proof for the convenience of the reader: there is a small gap in the original argument\footnote{I am grateful to Alex Cohen for bringing this to my attention.} and we present a slight modification that is in the same spirit as the original proof.\\

We introduce the hat function $h$ on the interval $[0, 1/(48N)]$ via
$$ h(x) = \begin{cases} 96 N x \qquad &\mbox{if}~0\leq x \leq \frac{1}{96 N} \\
2-96 N x \qquad &\mbox{if}~\frac{1}{96N} \leq x \leq \frac{1}{48 N} \\
0 \qquad &\mbox{otherwise} \end{cases}$$
and on any other interval of length $1/(48N)$ via translation. We also recall the definition of $ f(x) = 1/2 - \left\{x\right\}$. For any set of $N$ points, we now set
$$ F_N(x) = \frac{1}{N}\sum_{k=1}^{N}f(x-x_k).$$
We recall that $F_N$ is a function having constant slope $-1$ that is interrupted by jumps of size $1/N$ at the points. We also note that it has mean value 0 and that it cannot be too small since
$\|F_N\|_{L^1} \geq 1/(4N)$.

\begin{lemma} Let $\left\{x_1, \dots, x_N \right\}$ be fixed. Let us partition the unit interval $[0,1]$ into $48\cdot N$ intervals of equal length $I_1, I_2, \dots, I_{48N}$. There exists a universal constant $c>0$ such that there always exists a subset $A \subset \left\{1, \dots, 48N\right\}$ of intervals such that
$$ \sum_{a \in A} \int_{I_a} F_N(x) h\left(x-\frac{a-1}{48N}\right) dx \geq c \|F_N\|_{L^1}.$$
\end{lemma}
\begin{proof} Since $F_N$ has mean value 0, we have
$$ \|F_N^+\|_{L^1} = \|F_N^{-}\|_{L^1}$$
and it suffices to capture a sufficient amount of $\|F_N^+\|$.
We subdivide the $48 \cdot N$ intervals into three groups. 
\begin{align*}
 \mathcal{A} &= \mbox{intervals where the value of}~F_N~\mbox{at the left endpoint} \geq \frac{1}{48N}\\
 \mathcal{B} &= \mbox{intervals}~I_j~\mbox{not in}~A~\mbox{for which}~ 48N\int_{I_J}{F_N^+(x)dx} \leq \frac{1}{24N}\\
  \mathcal{C} &= \mbox{intervals}~I_j~\mbox{not in}~A~\mbox{for which}~ 48N\int_{I_J}{F_N^+(x)dx} \geq \frac{1}{24N}.
\end{align*}
We will show that by putting hat functions in $\mathcal{A}$, we capture a positive proportion of the mass.
We will not try to capture the mass in $\mathcal{B}$. Note that
$$ \sum_{b \in \mathcal{B}} \int_{I_b}{F_N^+(x) dx} \leq \frac{1}{24N} \leq \frac{\|F_N^+\|_{L^1}}{3},$$
so there is at most a third of the total positive mass and we can afford not capturing it as long as we capture a sufficient amount of the remaining mass. The
mass in intervals $I_j \in \mathcal{A}$ is relatively easy to capture: we note that $F_N$ is positive in the entire interval and decays at most at slope $-1$. The only way the inner product against the hat function would not capture a constant proportion is if $F_N$ were to increase rapidly in regions where the hat function is small (i.e. close to the right-hand side of the interval which could happen if there are several points there). In that case, however, we have $I_{j+1} \in A$ and capture a fraction of the mass by the hat function in the first half of the next interval. Let now $I_j \in \mathcal{C}$. We observe that
$$ m_j = \max_{x \in I_j} F_N(x) \geq 48N \int_{I_j} F_N^+(x) dx \geq \frac{1}{24N}.$$
Since the slope is $-1$, this shows that $I_{j+1} \in \mathcal{A}$. Moreover, we have
$$ \int_{I_j}F_N^+(x) dx \leq \frac{m_j}{48N}$$
as well as
\begin{align*}
 \int_{I_{j+1}} F_N^+(x) dx &\geq \int_{0}^{1/(48N)} \left(m_j - \frac{1}{48N} -  x\right) dx \\
 &\geq  \int_{0}^{1/(48N)} \left(\frac{m_j}{2} -  x\right) dx = \frac{m_j}{96N} - \frac{1}{96N}\cdot\frac{1}{48N}\\
 &= \frac{1}{96N}\left( m_j - \frac{1}{48N} \right) \geq \frac{m_j}{192 N}
 \end{align*}
 which shows that
 $$ \int_{I_j}F_N^+(x) dx \leq 4\int_{I_{j+1}}F_N^+(x) dx.$$
 This shows that at least a fixed proportion of $\|F_N^+\|_{L^1}$ has to lie in the intervals $\mathcal{A}$ and of those we capture at least a fixed amount concluding the argument.
\end{proof}

\begin{proof}[Proof of Wagner's estimate (summarized from \cite{wagner})] The argument initially follows Wagner's argument. Towards the end, we see that the argument is strong enough to also prove 
$$ \int_{0}^{1} \left|\frac{1}{N} \sum_{ k=1}^{N} \log \left|2 \sin{(\pi (x - x_k))} \right| \right|^2 dx \gtrsim  N^2\| F_N\|^2_{L^2}  .$$

Let $h$ be a hat function supported on an interval of length $(48 N)^{-1}$
$$ h(x) = \begin{cases} 96 N x \qquad &\mbox{if}~0\leq x \leq \frac{1}{96 N} \\
2-96 N x \qquad &\mbox{if}~\frac{1}{96N} \leq x \leq \frac{1}{48 N} \\
0 \qquad &\mbox{otherwise.} \end{cases}$$
We will now put such a hat function in each good interval $I_a$ where $a \in A$ by setting
$$H(x) = \sum_{a \in \mathcal{A}} h\left(x - \frac{a}{N} \right).$$
 The Lemma above shows that, for some universal $c>0$,
\begin{align*}
\int_{0}^{1} F_N(x) H(x) dx &\geq  c\|F_N\|_{L^1}.
\end{align*}
Now we aim to provide an upper bound on the integral. This is done by introducing conjugate functions. Using Lemma 2, we have
$$ N \cdot F_N(x) = \sum_{k=1}^{N}f(x-x_k).$$
The Fourier series of $f$ has a simple closed form resulting in
$$ N \cdot F_N(x) = \frac{1}{\pi}  \sum_{k=1}^{N} \sum_{n=1}^{\infty} \frac{\sin{(2\pi n (x-x_k))}}{n}.$$
Likewise, we have, using the Fourier series of $\log{|2\sin{(\pi |x_i-x_j|)}|}$ that
$$ \sum_{ k=1}^{N} \log \left|2 \sin{(\pi (x - x_k))} \right| =  \frac{1}{\pi}  \sum_{k=1}^{N} \sum_{n=1}^{\infty}  \frac{\cos{(2\pi n (x-x_k))}}{n}.$$
These two Fourier series are conjugate. In particular, if we associate to an arbitrary function
$$ g(x) = a_0 + \sum_{n=1}^{\infty} \left(a_n \cos{(2\pi n x)} + b_n \cos{(2 \pi n x)} \right)$$
the conjugate function
$$ \tilde g (x) =  \sum_{n=1}^{\infty}\left( -b_n \cos{(2\pi n x)} + a_n \cos{(2 \pi n x)}\right),$$
then there is the identity
$$ N\int_{0}^{1}{ F_N(x) g(x) dx} = \int_{0}^{1} \left( \sum_{ k=1}^{N} \log \left|2 \sin{(\pi (x - x_k))} \right|  \right) \tilde g(x) dx.$$
We apply this identity to obtain
\begin{align*}
N \frac{\|F_N\|_{L^1}}{64} &\leq N\int_{0}^{1} F_N(x) H(x) dx =\int_{0}^{1} \left( \sum_{ k=1}^{N} \log \left|2 \sin{(\pi (x - x_k))} \right|  \right) \tilde H(x) dx\\
 &\leq \| \tilde H(x) \|_{L^{\infty}} \int_{0}^{1} \left| \sum_{ k=1}^{N} \log \left|2 \sin{(\pi (x - x_k))} \right|  \right|  dx.
 \end{align*}
 In particular, Wagner's estimate follows from showing that $\| \tilde H(x)\|_{L^{\infty}} \lesssim \log{N}$ which he shows via explicit computation.
 \end{proof}
  However, we can also deduce
 $$  N\frac{\|F_N\|_{L^1}}{64}  \leq  \| \tilde H(x) \|_{L^{2}} \left(\int_{0}^{1} \left| \sum_{ k=1}^{N} \log \left|2 \sin{(\pi (x - x_k))} \right|  \right|^2  dx\right)^{\frac{1}{2}}.$$
 As can be seen using the explicit formula for Fourier series, conjugation of a function does not increase the $L^2-$norm, therefore
 $$ \| \tilde H(x) \|_{L^{2}} \leq \| H(x) \|_{L^{2}} \lesssim 1.$$
 We note that this last inequality is actually an identity since
 $$ \sum_{k=1}^{N}{ \log{|2 \sin{( \pi (x-x_k))}|}} = -\sum_{m =1}^{\infty} \frac{\cos{(2\pi m x)}}{m} \sum_{k=1}^{N} e^{2\pi i k x_n}$$
from which we deduce
$$ \int_{0}^{1} \left| \sum_{k=1}^{N}{ \log{|2 \sin{( \pi (x-x_k))}|}}  \right|^2 dx = \frac{1}{2}\sum_{m=1}^{\infty} \frac{1}{m^2} \left| \sum_{k=1}^{N} e^{2\pi i k x_n} \right|^2.$$

\textbf{Remark.} For our application, it would actually suffice to have to have a lower bound the maximum of 
$$\sum_{ k=1}^{N} \log \left|2 \sin{(\pi (x - x_k))} \right|  .$$
We could emulate Wagner's argument until we obtain
\begin{align*}
  N\frac{\|F_N\|_{L^1}}{64} &\leq \int_{0}^{1} \left( \sum_{ k=1}^{N} \log \left|2 \sin{(\pi (x - x_k))} \right|  \right) \tilde H(x) dx \\
 &\leq \| \tilde H(x) \|_{L^1} \left\|  \sum_{ k=1}^{N} \log \left|2 \sin{(\pi (x - x_k))} \right| \right\|_{L^{\infty}}
  \end{align*}
  which will certainly not result in a worse results. The problem is that we need a lower bound on the maximum and not
  on the $L^{\infty}-$norm: for a function with mean value 0, the $L^1-$norm then certainly as such a lower bound. A
  better understanding of this situation would be desirable.

\subsection{Proof of Theorem 1 and Theorem 2}
\begin{proof} We first describe the proof of Theorem 2 for Leja sequences. Afterwards we explain which modifications are required for symmetric Leja sequences which will then establish Theorem 1.
Recall that 
$$ \sum_{i,j=1 \atop i \neq j }^{N}{\log{|2\sin{(\pi |x_i-x_j|)}|}} \leq N \log{N}.$$
We write
$$ \sum_{i,j=1 \atop i \neq j }^{N}{\log{|2\sin{(\pi |x_i-x_j|)}|}} = 2\sum_{n=2}^{N} \sum_{j=1}^{n-1}\log{|2\sin{(\pi |x_n-x_j|)}|}.$$
The inner sum is easy to analyze: since $x_n$ is chosen so as to maximize this expression and the function has mean value 0, we have
\begin{align*}
\sum_{j=1}^{n-1}\log{|2\sin{(\pi |x_n-x_j|)}|} &=\max_{0 \leq x \leq 1}  \sum_{j=1}^{n-1}\log{|2\sin{(\pi |x -x_j|)}|} \\
&\geq  \frac{1}{2}\left\| \sum_{j=1}^{n-1}\log{|2\sin{(\pi |x -x_j|)}|} \right\|_{L^{1}}.
\end{align*}
Now we employ Wagner's estimate and argue that
$$   \left\| \sum_{j=1}^{n-1}\log{|2\sin{(\pi |x -x_j|)}|} \right\|_{L^{1}} \gtrsim \frac{n-1}{\log{(n-1)}} \| F_{n-1}\|_{L^1} \geq \frac{n-1}{\log{(N)}} \| F_{n-1}\|_{L^1}.$$
From this, Theorem 2 follows. As for Theorem 1, we simply observed that the same argument applies and
$$   \left\| \sum_{j=1}^{n}\log{|2\sin{(\pi |x -x_j|)}|} \right\|_{L^{1}} \gtrsim n \| F_{n}\|_{L^1}$$
when $n$ is even. However, employing Lemma 2, we see that for even $n$, the set is symmetric, by construction,
and thus
$$ n \| F_{n}\|_{L^1} = n \| D_{n}\|_{L^1}.$$
This gives us the desired summand for $n$ is even but not for $n$ is odd. We note that
\begin{align*}
 \|D_N - D_{N+1}\|_{L^1} &\leq \|D_N - D_{N+1}\|_{L^{\infty}} \\
 &= \left\| \frac{\#\left\{ 1 \leq k \leq N: x_k \leq x\right\}}{N}  - \frac{\#\left\{ 1 \leq k \leq N+1: x_k \leq x\right\}}{N+1} \right\|_{L^{\infty}}\\
 &\leq \max_{1 \leq m \leq N} \max\left( \frac{m}{N} - \frac{m}{N+1}, \frac{m}{N} - \frac{m+1}{N+1} \right) \lesssim \frac{1}{N}.
 \end{align*}
This shows that $D_N$ for $N$ odd is not too different from $D_N$ with $N$ even and since we have the identity for even numbers, Theorem 1 follows.
\end{proof}

\subsection{Proof of Theorem 3}

\begin{proof}
We recall that
$$  \frac{1}{4\pi} \int_{|z|=1} \log \left(\prod_{k=1}^{n}{|z-z_k|^2}\right) d \sigma  =\int_{0}^{1} \left|\sum_{k=1}^{n}{\log{|e^{2\pi ix}-e^{2\pi i x_k}|}}\right|^2 dx.
$$
Since
$$ \log{|e^{2 \pi ix}-e^{2 \pi i x_k}|} = \log{|2 \sin{( \pi (x-x_k))}|}$$
and thus
$$ \int_{0}^{1} \left|\sum_{k=1}^{N}{\log{|e^{2\pi ix}-e^{2\pi i x_k}|}}\right|^2 dx =  \int_{0}^{1} \left|\sum_{k=1}^{N}{ \log{|2 \sin{( \pi (x-x_k))}|}} \right|^2 dx.$$
By the equivalence of $F_N$ and the logarithmic potential
$$ \int_{0}^{1} \left|\sum_{k=1}^{n}{ \log{|2 \sin{( \pi (x-x_k))}|}} \right|^2 dx \sim  N^2\| F_N\|^2_{L^2}.$$ 
The remaining ingredient is a classical irregularities of distribution result due to Proinov \cite{proinov} (a nice exposition of the result is due to Kirk \cite{kirk}): for any infinite sequence $(x_n)_{n=1}^{\infty}$ we have
$$ F_N \geq c \frac{\sqrt{\log{N}}}{N} \qquad \mbox{for infinitely many}~N.$$
Moreover, this is optimal and there are sequences attaining this rate of growth.
\end{proof}

\end{document}